\numberwithin{equation}{section}
\newtheorem{theorem}{Theorem}[section]
\newtheorem{corollary}{Corollary}[section]
\newtheorem{definition}{Definition}[section]
\theoremstyle{remark}
\title[On certain classes of harmonic functions...] {On certain classes of harmonic functions defined by the fractional derivatives }
\author[M. Eshagi Gordji, S.  Shams and A. Ebadian] {M. Eshaghi Gordji, S. Shams and A. Ebadian}
\address {Department of Mathematics, Faculty of Science, Semnan University, Semnan, Iran}
\email{madjid.eshaghi@gmail.com} 
\address {Department of Mathematics, Faculty of Science, Urmia University, Urmia,Iran}
\email{sa40shams@yahoo.com} \email{a.ebadian@mail.urmia.ac.ir}
\subjclass[2000]{Primary 30C45; Secondary 30C80}
\keywords{ Multivalent, Harmonic, Convex, Starlike, Convolution and
Fractional derivatives}
\begin{document}
\begin{abstract} In this paper we have introduced two new classes
$\mathcal{H}\mathcal{M}(\beta, \lambda, k, \nu)$ and
$\overline{\mathcal{H}\mathcal{M}} (\beta, \lambda, k, \nu)$  of
complex valued harmonic multivalent functions of the form $f = h +
\overline g$, satisfying  the condition
\[ Re \left\{ (1 - \lambda) \frac{\Omega^vf}{z} + \lambda(1-k) \frac{(\Omega^vf)'}{z'} + \lambda k \frac{(\Omega^vf)''}{z''}
\right\} > \beta ,~ (z\in \mathcal{D})\] where $h$ and $g$ are
analytic in the unit disk $\mathcal{D} = \{ z : |z| < 1\}.$ A
sufficient coefficient condition for this function in the class
$\mathcal{H}\mathcal{M}(\beta, \lambda, k, \nu)$ and a necessary and
sufficient coefficient condition for the function $f$ in the class
$\overline{\mathcal{H}\mathcal{M}}(\beta, \lambda, k, \nu)$ are
determined. We investigate inclusion relations, distortion theorem,
extreme points, convex combination and other interesting properties
for these families of harmonic functions.
\end{abstract}

\maketitle
\section{Introduction}
Let $u,v$ be real harmonic function in a simply connected domain
$\Omega$ , then the continuous function $f=u+iv$ defined in $\Omega$
is said to be harmonic in $\Omega$. If $f=u+iv$ is harmonic in
$\Omega$ then there exist analytic functions $G,H$ such that $u=Re~
G$ and $v=Im~ H$ , therefor $f=u+iv=h+\overline g$ where
$h=\frac{G+H}{2},~ \overline g=\frac{\overline G-\overline H}{2}$
and we call $h$ and $g$ analytic part and co-analytic part of $f$
respectively. The jacobian of $f$ is given by
$J_f|z|=|h'(z)|^2-|g'(z)|^2$ , also we show by $w(z)$ the dilatation
function for $f$ and define $w(z)=\frac {g'(z)}{h'(z)}.$ Lewy [6],
Clunie and Small [3] have showed that the mapping $z\longrightarrow
f(z)$ is sense preserving and injective in $\Omega$ if and only if
$J_f|z|>0$ in $\Omega$. The function $f=h+\overline g$ is said to be
univalent in $\Omega$ if the mapping $z\longrightarrow f(z)$ is
sense preserving and injective in $\Omega$. Denote by $\mathcal{H}$
the class of all harmonic functions $f=h+\overline g$ that are
univalent and sense preserving in the open unit disk $\mathcal{D}$
where
\begin{equation}
h(z)=z+\sum_{n=2}^\infty a_nz^n,~ g(z)=\sum_{n=1}^\infty b_n z^n~~
|b_1|<1.
\end{equation}
With normalization conditions $f(0)=0,~ f_z(0)=1$ where $f_z(0)$
denotes the partial derivative of $f(z)$ at $z=0.$ In case $g=0$
this class reduces to the class of $\mathcal{S}$ consisting of all
analytic univalent functions.

\begin{definition} \label{th2.2}( See [7] and [9]) Let the function
$f(z)$ be analytic in a simply-connected region of the $z$-plane
containing the origin. The fractional derivative of $f$ of order
$\nu$ is defined by
\[D_z^\nu f(z)=\frac{1}{\Gamma(1-\nu)}\frac{d}{dz}\int_0^1\frac{f(\zeta)}{(z-\zeta)^\nu}d\zeta, ~~0\leq\nu<1\]
where the multiplicity of $(z-\zeta)^\nu$ is removed by requiring
$\log (z-\zeta)$ to be real when $z-\zeta>0 .$
\end{definition}
Making use of fractional derivative and its known extensions
involving fractional derivatives and fractional integrals, Owa and
Srivastava [8] introduced the operator
$\Omega_z^\nu:\mathcal{A}_0\longrightarrow \mathcal{A}_0$ defined by
\[ \Omega_z^\nu f(z):=\Gamma(2-\nu)z^\nu D_z^\nu f(z) ~~ \nu\neq 2,3,4,...\]
where $\mathcal{A}_0$ denote the class of functions which are
analytic in the unit disk $\mathcal{D}$, satisfying normalization
conditions $f(0)=f'(0)-1=0.$

It is easy to see that
\[ \Omega_z^\nu f(z)=z+\sum_{n=2}^\infty \frac{\Gamma(2-\nu)\Gamma(n+1)}{\Gamma(n+1-\nu)}a_nz^n.~~ f\in \mathcal{A}_0\]

\begin{definition} \label{th2.2}  Suppose that $f=h+\overline g$ where
$h$ and $g$ are in (1.1), define $\Omega_z^\nu f(z)=\Omega_z^\nu
h(z)+\overline {\Omega_z^\nu g(z)}.$
\end{definition}
Then we obtain
\[\Omega_z^\nu f(z)=z+\sum_{n=2}^\infty \frac{\Gamma(2-\nu)\Gamma(n+1)}{\Gamma(n+1-\nu)}a_nz^n+ \sum_{n=1}^\infty \frac{\Gamma(2-\nu)\Gamma(n+1)}
{\Gamma(n+1-\nu)}b_n{\overline z}^n.\]

By making use of Definition 1.2, we introduce a new class of
harmonic univalent functions in the unit disk $\mathcal{D}$ as in
definition 1.3.

\begin{definition} \label{th2.2} Let $\mathcal{H}\mathcal{M}(\beta, \lambda, k, \nu)~ (0\leq
k\leq 1,~0<\beta\leq 1,~0\leq\lambda,~0\leq\nu <1)$ be the class of
functions $f\in \mathcal{H}$ satisfying the following inequality:
\[ Re~ \left\{ (1 - \lambda) \frac{\Omega^vf}{z} + \lambda(1-k) \frac{(\Omega^vf)'}{z'} + \lambda k
\frac{(\Omega^vf)''}{z''} \right\} > \beta .~ (z=re^{i\theta})\]
where
\[z'=\frac{\partial}{\partial\theta}\left(re^{i\theta}\right),~~ z''=\frac{\partial}{\partial\theta}( z')
,\] and
\[(\Omega^\nu f(z))'=
\frac{\partial}{\partial\theta}\left(\Omega^\nu
f(z)\right)=iz(\Omega^\nu h(z))'-i\overline {z(\Omega^\nu g(z))'},\]
\[(\Omega^\nu f(z))''=\frac{\partial}{\partial\theta}(\Omega^\nu
f(z))'=-z(\Omega^\nu h(z))'-z^2(\Omega^\nu h(z))''-\overline
{z(\Omega^\nu g(z))'}-\overline {z^2(\Omega^\nu g(z))''},\] also we
denote by $\overline{\mathcal{H}\mathcal{M}}(\beta, \lambda, k,
\nu)$ the subclass of $\mathcal{H}\mathcal{M}(\beta, \lambda, k,
\nu)$ consisting of functions $f=h+\overline g$ such that
\begin{equation}
h(z)=z-\sum_{n=2}^\infty |a_n|z^n,~~ g(z)=\sum_{n=1}^\infty
|b_n|z^n,~~|b_1|<1.
\end{equation}
\end{definition}
In [9] H. M. Srivastava and S. Owa investigated this class with
$D^\nu f(z)$ instead of $\Omega^\nu f(z)$ where $D^\nu f(z)$ is the
Ruscheweyh derivative of $f$, for $p$-valent harmonic functions.
This class in special cases involve the works studied by the
previous authors such as Bhoosnurmath and Swamay [2], Ahuja and
Jahangiri [1,5].

In this paper the coefficient inequalities for the classes
$\mathcal{H}\mathcal{M}(\beta, \lambda, k, \nu)$ and $\overline
{\mathcal{H}\mathcal{M}}(\beta, \lambda, k, \nu)$ are obtained also
some other interesting properties of these classes are investigated.

\section{Coefficient Bounds}

In the first theorem  we give the sufficient condition for
$f\in\mathcal{H}$ to be in the class $\mathcal{H}\mathcal{M}(\beta,
\lambda, k, \nu).$

\begin{theorem} \label{th2.2} Let $f\in\mathcal{H},$ and
\[\sum_{n=2}^\infty\phi(n,k,\lambda,\nu)|a_n|+\sum_{n=1}^\infty|\psi(n,k,\lambda,\nu)||b_n |<1-\beta,\]where
\begin{equation}
\phi(n,k,\lambda,\nu):=\frac{[1+\lambda(n-1)(1+nk)]\Gamma(n+1)\Gamma(2-\nu)}{\Gamma(n+1-\nu)},
\end{equation}
and
\begin{equation}
\psi(n,k,\lambda,\nu):=\frac{[1-\lambda(n+1)(1-nk)]\Gamma(n+1)\Gamma(2-\nu)}{\Gamma(n+1-\nu)},
\end{equation}
then $f\in \mathcal{H}\mathcal{M}(\beta, \lambda, k, \nu).$ The
result is sharp for the function $f(z)$ given by
\begin{eqnarray}
f(z)&&=z+\sum_{n=2}^\infty\frac{\gamma^n\Gamma(n+1-\nu)z^n}{[1+\lambda(n-1)(1+nk)]\Gamma(n+1)\Gamma(2-\nu)}\nonumber\\
&&+\sum_{n=1}^\infty\frac{\delta^n\Gamma(n+1-\nu)}{|1-\lambda(n+1)(1-nk)|\Gamma(n+1)\Gamma(n-\nu)}\overline
z^n\nonumber
\end{eqnarray}
where $\sum_{n=2}^\infty |\gamma_n|+\sum_{n=2}^\infty
|\delta_n|=1-\beta.$
\end{theorem}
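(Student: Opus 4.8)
The plan is to reduce the membership condition to a single power-series estimate. Since $z=re^{i\theta}$ gives $z'=\partial z/\partial\theta=iz$ and $z''=-z$, I would first substitute these, together with the series for $\Omega^\nu h$ and $\Omega^\nu g$ and the supplied formulas for $(\Omega^\nu f)'$ and $(\Omega^\nu f)''$, into the three quotients inside the real part. Writing $A_n=\Gamma(2-\nu)\Gamma(n+1)/\Gamma(n+1-\nu)$, a direct computation yields
\[ \frac{\Omega^\nu f}{z}=1+\sum_{n=2}^\infty A_n a_n z^{n-1}+\sum_{n=1}^\infty A_n \overline{b_n}\,\frac{\overline{z}^{\,n}}{z},\qquad \frac{(\Omega^\nu f)'}{z'}=1+\sum_{n=2}^\infty nA_n a_n z^{n-1}-\sum_{n=1}^\infty nA_n\overline{b_n}\,\frac{\overline{z}^{\,n}}{z}, \]
and $\frac{(\Omega^\nu f)''}{z''}=1+\sum_{n=2}^\infty n^2A_n a_n z^{n-1}+\sum_{n=1}^\infty n^2A_n\overline{b_n}\,\frac{\overline{z}^{\,n}}{z}$. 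All three constant terms equal $1$, and since $(1-\lambda)+\lambda(1-k)+\lambda k=1$, the full expression $A:=(1-\lambda)\frac{\Omega^\nu f}{z}+\lambda(1-k)\frac{(\Omega^\nu f)'}{z'}+\lambda k\frac{(\Omega^\nu f)''}{z''}$ also has constant term $1$.

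The decisive step is to collect the $a_n$- and $b_n$-coefficients of $A$. The weight on $A_n a_n z^{n-1}$ is $(1-\lambda)+\lambda(1-k)n+\lambda kn^2$, which factors as $1+\lambda(n-1)(1+nk)$; multiplying by $A_n$ reproduces $\phi(n,k,\lambda,\nu)$. The weight on the $\overline{b_n}$-term is $(1-\lambda)-\lambda(1-k)n+\lambda kn^2=1-\lambda(n+1)(1-nk)$, giving $\psi(n,k,\lambda,\nu)$. Hence
\[ A=1+\sum_{n=2}^\infty \phi(n,k,\lambda,\nu)\,a_n z^{n-1}+\sum_{n=1}^\infty \psi(n,k,\lambda,\nu)\,\overline{b_n}\,\frac{\overline{z}^{\,n}}{z}. \]
I expect the bookkeeping here — tracking the $\theta$-derivatives, the conjugations coming from the co-analytic part, and the three different weights simultaneously — to be the main obstacle, since the two factorizations are exactly where a sign or index slip would survive unnoticed.

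With $A$ in this form the conclusion is routine. Using $\operatorname{Re}A\ge 1-|A-1|$ and $|\overline{z}^{\,n}/z|=|z|^{n-1}\le 1$ for $|z|=r<1$, I would estimate
\[ |A-1|\le \sum_{n=2}^\infty \phi(n,k,\lambda,\nu)|a_n|\,r^{n-1}+\sum_{n=1}^\infty |\psi(n,k,\lambda,\nu)|\,|b_n|\,r^{n-1}\le \sum_{n=2}^\infty \phi(n,k,\lambda,\nu)|a_n|+\sum_{n=1}^\infty |\psi(n,k,\lambda,\nu)|\,|b_n|<1-\beta, \]
the last inequality being precisely the hypothesis (and $|\overline{b_n}|=|b_n|$). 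Thus $\operatorname{Re}A>1-(1-\beta)=\beta$ throughout $\mathcal{D}$, so $f\in\mathcal{HM}(\beta,\lambda,k,\nu)$. Finally, for sharpness I would check that the stated extremal $f$, whose coefficients are chosen so that $\phi(n,k,\lambda,\nu)|a_n|=|\gamma_n|$ and $|\psi(n,k,\lambda,\nu)|\,|b_n|=|\delta_n|$, realizes equality $\sum|\gamma_n|+\sum|\delta_n|=1-\beta$ (the boundary case of the hypothesis); after an appropriate choice of the arguments of $\gamma_n,\delta_n$ one gets $\operatorname{Re}A\to\beta$ as $z\to 1^{-}$, showing that the constant $1-\beta$ cannot be enlarged.
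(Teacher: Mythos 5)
Your proof is correct and takes essentially the same route as the paper: both reduce the claim to the series expansion $E(z)=1+\sum_{n\ge 2}\phi(n,k,\lambda,\nu)a_nz^{n-1}+\sum_{n\ge 1}\psi(n,k,\lambda,\nu)\overline{b_n}\,\overline{z}^{\,n}/z$ of the operator expression and then apply a triangle-inequality estimate together with the coefficient hypothesis to force $\operatorname{Re}E>\beta$. The only cosmetic difference is that you conclude via $\operatorname{Re}A\ge 1-|A-1|$ whereas the paper uses the equivalent criterion $|1-\beta+E(z)|\ge|1+\beta-E(z)|$; your bookkeeping (explicit derivation of the three quotients, the factorizations of the weights, and the conjugates $\overline{b_n}$, which the paper sloppily writes as $b_n$) is in fact somewhat more careful than the paper's.
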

\begin{proof} Suppose
\[E(z)=(1-\lambda)\frac{\Omega^\nu f(z)}{z}+\lambda(1-k)\frac{(\Omega f(z))'}{z'}+\lambda k\frac{(\Omega f(z))''}{z''}.\]
It suffices to show that $|1-\beta+E(z)|\geq |1+\beta-E(z)|.$ A
simple calculation by substituting for $h$ and $g$ in $E(z)$ shows
\begin{eqnarray}
E(z)&&=1+\sum_{n=2}^\infty\frac{[1+\lambda(n-1)(1+nk)]\Gamma(n+1)\Gamma(2-\nu)}{\Gamma(n+1-\nu)}a_nz^{n-1}\nonumber\\&&+
\sum_{n=1}^\infty\frac{[1-\lambda(n+1)(1-nk)]\Gamma(n+1)\Gamma(2-\nu)}{\Gamma(n+1-\nu)}b_n\frac{\overline
z^n}{z},\nonumber
\end{eqnarray}
Considering (2.1) and (2.2) we have
\[
\phi(n,k,\lambda,\nu)=n(n-1)[1+\lambda(n-1)(1+nk)]B(n-1,2-\nu),
\] and
\[
\psi(n,k,\lambda,\nu)=n(n-1)[1-\lambda(n+1)(1-nk)]B(n-1,2-\nu),
\]
where $B(\alpha,\beta)=\int_0^1
t^{\alpha-1}(1-t)^{\beta-1}dt=\frac{\Gamma(\alpha)\Gamma(\beta)}{\Gamma(\alpha+\beta)}$
is the familiar Beta function. Then we obtain
\[E(z)=1+\sum_{n=2}^\infty\phi(n,k,\lambda,\nu)a_nz^{n-1}+\sum_{n=1}^\infty\psi(k,n,\lambda,\nu)b_n\frac{\overline
z^n}{z}.\] Now we have
\begin{eqnarray}
&&|1-\beta+E(z)|-|1+\beta-E(z)|\nonumber\\
&&=|2-\beta+\sum_{n=2}^\infty\phi(n,k,\lambda,\nu)a_nz^{n-1}+\sum_{n=1}^\infty\psi(n,k,\lambda,\nu)b_n\frac{\overline
z^n}{z}|\nonumber\\&&-|\beta-\sum_{n=2}^\infty\phi(n,k,\lambda,\nu)a_nz^{n-1}-\sum_{n=1}^\infty\psi(n,k,\lambda,\nu)b_n\frac{\overline
z^n}{z}|\nonumber\\
&&\geq
2-\beta+\sum_{n=2}^\infty\phi(n,k,\lambda,\nu)|a_n||z|^{n-1}+\sum_{n=1}^\infty|\psi(n,k,\lambda,\nu)||b_n||\frac{\overline
z^n}{z}|\nonumber\\
&&-\beta-\sum_{n=2}^\infty\phi(n,k,\lambda,\nu)|a_n||z|^{n-1}-\sum_{n=1}^\infty|\psi(n,k,\lambda,\nu)||b_n||\frac{\overline
z^n}{z}|\nonumber\\
&&=2-2\beta-2\sum_{n=2}^\infty\phi(n,k,\lambda,\nu)|a_n||z|^{n-1}-2\sum_{n=1}^\infty|\psi(n,k,\lambda,\nu)||b_n||\frac{\overline
z^n}{z}|\nonumber\\
&&>2-2\beta-2\sum_{n=2}^\infty\phi(n,k,\lambda,\nu)|a_n||z|^{n-1}-2\sum_{n=1}^\infty|\psi(n,k,\lambda,\nu)||b_n||\frac{\overline
z^n}{z}|\nonumber\\
&&\geq 0,\nonumber
\end{eqnarray}
and the proof is complete.
\end{proof}
In our next theorem we obtain the necessary and sufficient
coefficients condition for the $f\in\mathcal{H}$ to be in
$\overline{\mathcal{H}\mathcal{M}}(\beta,\lambda,k,\nu).$
\begin{theorem} \label{th2.2} Let $f\in\mathcal{H}$ then
$f\in\overline{\mathcal{H}\mathcal{M}}(\beta,\lambda,k,\nu)$ if and
only if
\begin{equation}
\sum_{n=2}^\infty\phi(n,k,\lambda,\nu)|a_n|+\sum_{n=1}^\infty
|\psi(n,k,\lambda,\nu)||b_n|<1-\beta.
\end{equation}
\end{theorem}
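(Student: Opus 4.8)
The statement is an equivalence whose two halves have very unequal difficulty, so the plan is to treat them separately. The sufficiency (``if'') direction requires almost no work: inequality (2.3) is word for word the hypothesis of Theorem 2.1, which already guarantees $f\in\mathcal{H}\mathcal{M}(\beta,\lambda,k,\nu)$. Since the functions under consideration are assumed to be of the special form (1.2), with $h(z)=z-\sum_{n\ge2}|a_n|z^n$ and $g(z)=\sum_{n\ge1}|b_n|z^n$, membership in the subclass $\overline{\mathcal{H}\mathcal{M}}(\beta,\lambda,k,\nu)$ follows at once. I would dispose of this direction in one or two lines by simply quoting Theorem 2.1.

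The real content is the necessity (``only if''). Here I would start from the defining inequality $Re\{E(z)\}>\beta$ of Definition 1.3, where $E$ is exactly the operator combination whose series expansion was already computed in the proof of Theorem 2.1. Substituting the form (1.2) (so that $a_n$ is replaced by $-|a_n|$ and the co-analytic coefficients are the nonnegative $|b_n|$) into that expansion gives
\[ E(z)=1-\sum_{n=2}^\infty\phi(n,k,\lambda,\nu)|a_n|z^{n-1}+\sum_{n=1}^\infty\psi(n,k,\lambda,\nu)|b_n|\frac{\overline z^{\,n}}{z}. \]
The guiding idea is that $Re\{E(z)\}>\beta$ is required on the whole disk, hence survives the limit $z\to1^-$ taken along the positive real axis. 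On that ray both $z^{n-1}$ and $\overline z^{\,n}/z$ collapse to the real quantity $r^{n-1}$, and the condition reduces to
\[ \sum_{n=2}^\infty\phi(n,k,\lambda,\nu)|a_n|r^{n-1}-\sum_{n=1}^\infty\psi(n,k,\lambda,\nu)|b_n|r^{n-1}<1-\beta,\qquad 0<r<1. \]
Letting $r\to1^-$ then produces the coefficient estimate, provided each term is handled with the correct sign.

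The hard part will be precisely the sign of $\psi$. The target inequality (2.3) carries the absolute value $|\psi(n,k,\lambda,\nu)|$, whereas the radial substitution produces the signed coefficient $\psi(n,k,\lambda,\nu)$, and the two agree only when $\psi(n,k,\lambda,\nu)\le0$; moreover one cannot flip all of the co-analytic terms simultaneously by a single choice of $\arg z$, since forcing $\cos((n-1)\theta)=1$ for every analytic term drives $\theta\to0$ and leaves $\psi$ with its bare sign. Thus the genuinely delicate point is to verify that, within the parameter ranges $0\le k\le1$, $0\le\lambda$, $0\le\nu<1$ governing the retained terms, the relevant $\psi$-values carry the sign needed for the radial inequality to collapse onto (2.3), and I would isolate this sign analysis as the main technical step. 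Once it is settled, the truncated sums become monotone in $r$, the passage to the limit $r\to1^-$ is routine, and reading off (2.3) is immediate; the sharp extremal functions exhibited in Theorem 2.1 then corroborate that the resulting bound is best possible.
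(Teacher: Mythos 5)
Your ``if'' half is correct and is exactly the paper's: a function of the form (1.2) satisfying (2.3) lies in $\mathcal{H}\mathcal{M}(\beta,\lambda,k,\nu)$ by Theorem 2.1, hence in the subclass $\overline{\mathcal{H}\mathcal{M}}(\beta,\lambda,k,\nu)$. Your ``only if'' half also follows the paper's route --- substitute the form (1.2) into the expansion of $E(z)$, restrict to the positive real axis, let $r\to 1^-$ --- and you have correctly put your finger on the only nontrivial point: the radial restriction produces the signed terms $+\psi(n,k,\lambda,\nu)|b_n|r^{n-1}$, whereas (2.3) needs $-|\psi(n,k,\lambda,\nu)||b_n|$, and the two coincide only when $\psi(n,k,\lambda,\nu)\le 0$.

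However, your proposal stops exactly there, and the ``sign analysis'' you defer as the main technical step is not a technicality: it cannot be carried out. In the permitted parameter ranges $\psi$ is very often positive; for $\lambda=0$ one has $\psi(n,k,0,\nu)=\Gamma(n+1)\Gamma(2-\nu)/\Gamma(n+1-\nu)>0$ for every $n$, and for $k=1$ one has $\psi(n,1,\lambda,\nu)=[1+\lambda(n^2-1)]\Gamma(n+1)\Gamma(2-\nu)/\Gamma(n+1-\nu)>0$ (these are precisely the cases of Corollaries 2.1 and 2.3). When $\psi(n,k,\lambda,\nu)>0$ the radial argument yields only
\[
\sum_{n=2}^\infty \phi(n,k,\lambda,\nu)|a_n| - \sum_{n=1}^\infty \psi(n,k,\lambda,\nu)|b_n| \le 1-\beta ,
\]
which is strictly weaker than (2.3), and, as you yourself observe, no other choice of $\arg z$ makes all analytic and all co-analytic terms simultaneously carry the needed signs. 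You should also know that the paper's own proof silently commits the very step you flagged: it replaces $\psi(n,k,\lambda,\nu)b_n$ by $|\psi(n,k,\lambda,\nu)||b_n|$ upon choosing $0\le z=r<1$, which is justified only when $\psi\le 0$. Indeed, when $\psi>0$ the necessity assertion itself fails: take $\lambda=\nu=0$, $\beta=\frac{7}{10}$ and $f(z)=z+\frac15\overline z+\frac15\overline z^{\,3}$. Since $h(z)=z$ and $|g'(z)|\le\frac45<1=|h'(z)|$, this $f$ is univalent and sense-preserving and has the form (1.2); moreover $Re\{f(z)/z\}=1+\frac15\cos 2\theta+\frac15 r^2\cos 4\theta>\frac{31}{40}>\beta$ throughout $\mathcal{D}$, so $f\in\overline{\mathcal{H}\mathcal{M}}(\frac7{10},0,k,0)$, yet $|\psi_1||b_1|+|\psi_3||b_3|=\frac25>1-\beta=\frac3{10}$. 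So the gap in your argument is fatal as stated: neither your proof nor the paper's can be completed without an additional hypothesis forcing $\psi(n,k,\lambda,\nu)\le 0$ for all relevant $n$.
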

\begin{proof} Since
$\overline{\mathcal{H}\mathcal{M}}(\beta,\lambda,k,\nu)\subset\mathcal{H}\mathcal{M}(\beta,\lambda,k,\nu)$
then the "if" part of theorem follows from Theorem 2.1, for "only
if" part we show that if the condition (2.3) dose not hold then
$f\ne\overline{\mathcal{H}\mathcal{M}}(\beta,\lambda,k,\nu).$ Let
$f\in\overline{\mathcal{H}\mathcal{M}}(\beta,\lambda,k,\nu)$ then we
have
\begin{eqnarray}
0&&\leq Re~ \left\{(1-\lambda)\frac{\Omega^\nu
f(z)}{z}+\lambda(1-k)\frac{(\Omega^\nu f(z))'}{z'}+\lambda
k\frac{(\Omega^\nu f(z))''}{z''}-\beta\right\}\nonumber\\
&&=Re~
\left\{1-\beta-\sum_{n=2}^\infty\phi(n,k,\lambda,\nu)a_nZ^{n-1}-\sum_{n=1}^\infty\psi(n,k,\lambda,\nu)b_n\frac{\overline
z^n}{z}\right\}.\nonumber
\end{eqnarray}
This inequality holds for all values of $z$ for which $|z|=r<1$ so
we can choose the values of $z$ on positive real axis such that
$0\leq z=r<1$ therefore we get the followin inequality
\[
0\leq 1-\beta-\sum_{n=2}^\infty
\phi(n,k,\lambda,\nu)|a_n|r^{n-1}-\sum_{n=1}^\infty
|\psi(n,k,\lambda,\nu)||b_n|r^{n-1}.\] Now by letting
$r\longrightarrow 1^-$ we have
\begin{equation}
0\leq
1-\beta-\sum_{n=2}^\infty\phi(n,k,\lambda,\nu)|a_n|-\sum_{n=1}^\infty
|\psi(n,k,\lambda,\nu)||b_n|.
\end{equation}
If the condition (2.3) dose not hold then the right hand of (2.4) is
negative for $r$ sufficiently close to $1.$ Thus there exists a
$z_0=r_0\in (0,1)$ for which the right hand of (2.4) is negative.
This contradicts the required condition for
$f\in\overline{\mathcal{H}{M}}(\beta,\lambda,k,\nu)$ and so the
proof is complete.
\end{proof}
Putting $\lambda=0$ in Theorem 2.2 we get:
\begin{corollary} \label{th2.2} $f\in\overline{\mathcal{H}\mathcal{M}}(\beta,0,k,\nu)=\left\{f:~ Re~ \left(\frac{\Omega^\nu
f(z)}{z}\right)>\beta\right\}$ if and only if
\[
\sum_{n=1}^\infty n(n-1)B(n-1,2-\nu)|a_n|+\sum_{n=1}^\infty
n(n-1)B(n-1,2-\nu)|b_n|< 1-\beta.
\]
\end{corollary}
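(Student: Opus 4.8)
The plan is to obtain this corollary directly as the special case $\lambda = 0$ of Theorem 2.2, since no independent argument is needed once the coefficient functionals are specialized. First I would set $\lambda = 0$ in the defining inequality of $\overline{\mathcal{H}\mathcal{M}}(\beta, \lambda, k, \nu)$: the two terms carrying the factor $\lambda$ drop out, leaving exactly $\operatorname{Re}\{\Omega^\nu f(z)/z\} > \beta$, which is the class description asserted on the left-hand side of the statement. Thus membership in $\overline{\mathcal{H}\mathcal{M}}(\beta, 0, k, \nu)$ is governed precisely by condition (2.3) evaluated at $\lambda = 0$.

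The key step is to evaluate $\phi(n, k, 0, \nu)$ and $\psi(n, k, 0, \nu)$. Setting $\lambda = 0$ in the Beta-function forms already derived in the proof of Theorem 2.1, namely $\phi(n, k, \lambda, \nu) = n(n-1)[1 + \lambda(n-1)(1+nk)]B(n-1, 2-\nu)$ and $\psi(n, k, \lambda, \nu) = n(n-1)[1 - \lambda(n+1)(1-nk)]B(n-1, 2-\nu)$, the two bracketed factors both collapse to $1$. Hence $\phi(n, k, 0, \nu) = \psi(n, k, 0, \nu) = n(n-1)B(n-1, 2-\nu)$, so the two sums in (2.3) acquire the common weight displayed in the corollary.

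Substituting these simplified weights into (2.3) then yields the stated inequality. I would note that the $a_n$-sum may be written from $n = 1$ without changing its value, since the factor $n(n-1)$ annihilates the $n = 1$ term, and this matches the indexing in the statement. There is no genuine obstacle here: the entire content is a substitution followed by the elementary identity $(n-1)\Gamma(n-1) = \Gamma(n)$ and $n\Gamma(n) = \Gamma(n+1)$ that underlies the Beta-function rewriting. The only point requiring care is confirming that both $\phi$ and $\psi$ reduce to the identical expression once $\lambda = 0$, which is what forces the single common coefficient factor in the final inequality.
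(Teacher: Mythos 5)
Your proposal follows exactly the paper's route: the paper offers no argument for this corollary beyond the phrase ``Putting $\lambda=0$ in Theorem 2.2,'' and your computation --- specializing $\phi(n,k,\lambda,\nu)$ and $\psi(n,k,\lambda,\nu)$ at $\lambda=0$ so that both collapse to the common weight $n(n-1)B(n-1,2-\nu)=\Gamma(n+1)\Gamma(2-\nu)/\Gamma(n+1-\nu)$ --- is precisely the intended substitution.

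One step of yours, however, does not hold up as stated: the claim that the $a_n$-sum may be started at $n=1$ ``since the factor $n(n-1)$ annihilates the $n=1$ term.'' At $n=1$ the weight involves $B(0,2-\nu)=\int_0^1 t^{-1}(1-t)^{1-\nu}\,dt$, which diverges, so $n(n-1)B(n-1,2-\nu)$ is the indeterminate form $0\cdot\infty$, not a term killed by the polynomial factor. The consistent interpretation is through the Gamma quotient $\Gamma(n+1)\Gamma(2-\nu)/\Gamma(n+1-\nu)$, which equals $1$ at $n=1$ --- and this value is forced for the $b_n$-sum, since $\psi(1,k,0,\nu)=1$ directly from (2.2); the condition must constrain $|b_1|$ with weight $1$. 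Notice that your annihilation convention, applied uniformly to both sums, would delete the $|b_1|$ term and yield a strictly weaker (and false) criterion, while the Gamma-quotient reading, applied uniformly, would insert a spurious term $|a_1|=1$ into the first sum and make the condition vacuous. So the two sums genuinely require different treatments at $n=1$: the $a_n$-sum must simply start at $n=2$ (as in (2.3) of Theorem 2.2), and the $b_n$-sum must carry weight $1$ there. This defect is inherited from the paper's own display, which uses the same ambiguous expression for both sums, but a careful write-up of the corollary should say so explicitly rather than paper over it with the $n(n-1)$ factor.
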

Putting $\lambda=1$ in Theorem 2.2 we have:
\begin{corollary}  \label{th2.2} $f\in\overline{\mathcal{H}\mathcal{M}}(\beta,1,k,\nu)=\left\{f:~ Re~ \left((1-k)\frac{(\Omega^\nu f(z))'}{z'}
+k\frac{(\Omega^\nu f(z))''}{z''}\right)>\beta\right\}$ if and only
if \[\sum_{n=2}^\infty
n^2(n-1)(1-k+nk)B(n-1,\nu)|a_n|+\sum_{n=1}^\infty
n^2(n-1)|nk+k-1|B(n-1,2-\nu)|b_n|<1-\beta.\]
\end{corollary}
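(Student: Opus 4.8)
The plan is to derive this statement as the special case $\lambda = 1$ of Theorem 2.2, so no fresh analytic input is needed beyond algebraic simplification. First I would confirm the description of the class appearing on the right-hand side: setting $\lambda = 1$ in the defining inequality of Definition 1.3 makes the term $(1-\lambda)\frac{\Omega^\nu f}{z}$ vanish, while $\lambda(1-k) = 1-k$ and $\lambda k = k$, which yields exactly
\[ Re~\left( (1-k)\frac{(\Omega^\nu f(z))'}{z'} + k\frac{(\Omega^\nu f(z))''}{z''}\right) > \beta. \]
Hence $\overline{\mathcal{H}\mathcal{M}}(\beta,1,k,\nu)$ is indeed the set claimed, and Theorem 2.2 applies verbatim with $\lambda = 1$.

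Next I would specialize the coefficient weights $\phi$ and $\psi$. Using the Beta-function forms established in the proof of Theorem 2.1,
\[ \phi(n,k,\lambda,\nu) = n(n-1)[1+\lambda(n-1)(1+nk)]B(n-1,2-\nu), \]
\[ \psi(n,k,\lambda,\nu) = n(n-1)[1-\lambda(n+1)(1-nk)]B(n-1,2-\nu), \]
I put $\lambda = 1$ and collapse the bracketed factors. The two key identities are
\[ 1 + (n-1)(1+nk) = n(1-k+nk), \qquad 1 - (n+1)(1-nk) = n(nk+k-1), \]
giving $\phi(n,k,1,\nu) = n^2(n-1)(1-k+nk)B(n-1,2-\nu)$ and $\psi(n,k,1,\nu) = n^2(n-1)(nk+k-1)B(n-1,2-\nu)$.

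Substituting these into the criterion $\sum_{n=2}^\infty \phi(n,k,1,\nu)|a_n| + \sum_{n=1}^\infty |\psi(n,k,1,\nu)||b_n| < 1-\beta$ of Theorem 2.2 produces the stated inequality. Since $n^2(n-1)B(n-1,2-\nu) \ge 0$, the modulus passes onto the single factor $nk+k-1$, which is precisely why $|nk+k-1|$ (and not $nk+k-1$ itself) appears in the $b_n$-sum. I anticipate no genuine obstacle here: the whole argument is a direct specialization, and the only points deserving care are verifying the two collapse identities above and observing that the factor $B(n-1,\nu)$ printed in the $a_n$-sum should read $B(n-1,2-\nu)$, in agreement with the reduced form of $\phi$.
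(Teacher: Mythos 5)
Your proposal is correct and is exactly the paper's route: the paper derives this corollary simply by ``putting $\lambda=1$ in Theorem 2.2,'' which is precisely your specialization, and your collapse identities $1+(n-1)(1+nk)=n(1-k+nk)$ and $1-(n+1)(1-nk)=n(nk+k-1)$ both check out. You are also right that the factor $B(n-1,\nu)$ printed in the $a_n$-sum of the corollary is a typographical slip for $B(n-1,2-\nu)$, as the reduced form of $\phi(n,k,1,\nu)$ shows.
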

Putting $k=1$ in Theorem 2.2 we have:
\begin{corollary} \label{th2.2} $f\in\overline{\mathcal{H}\mathcal{M}}(\beta,\lambda,1,\nu)=\left\{f:~ Re~ \left((1-\lambda)\frac{\Omega^\nu f(z)}{z}
+\lambda\frac{(\Omega^\nu f(z))''}{z''}\right)>\beta\right\}$ if and
only if \[\sum_{n=2}^\infty
n(n-1)[1+\lambda(n^2-1)]B(n-1,2-\nu)(|a_n|+|b_n|)<1-\beta.\]
\end{corollary}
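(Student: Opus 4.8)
The plan is to derive this statement as a direct specialization of Theorem 2.2 to the value $k=1$; there is no new analytic input here, the whole task being an algebraic collapse of the two coefficient functions into one, together with a sign check that lets me discard an absolute value. I would start from the Beta-function expressions established inside the proof of Theorem 2.1,
\[
\phi(n,k,\lambda,\nu)=n(n-1)[1+\lambda(n-1)(1+nk)]B(n-1,2-\nu),\qquad
\psi(n,k,\lambda,\nu)=n(n-1)[1-\lambda(n+1)(1-nk)]B(n-1,2-\nu),
\]
since these already exhibit the factor $B(n-1,2-\nu)$ occurring in the claimed inequality, and then set $k=1$.

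The main computation is the two bracket simplifications. For $\phi$ one uses $(n-1)(1+n)=n^2-1$, which gives $\phi(n,1,\lambda,\nu)=n(n-1)[1+\lambda(n^2-1)]B(n-1,2-\nu)$. For $\psi$, the one step worth doing with care, one uses $(n+1)(1-n)=-(n^2-1)$, so that $1-\lambda(n+1)(1-n)=1+\lambda(n^2-1)$ and hence $\psi(n,1,\lambda,\nu)=\phi(n,1,\lambda,\nu)$. Thus the analytic and co-analytic coefficient weights coincide at $k=1$. Since $n\ge 2$ forces $n^2-1\ge 3>0$ and $\lambda\ge 0$, the bracket $1+\lambda(n^2-1)$ is strictly positive; therefore $\psi(n,1,\lambda,\nu)>0$, and the modulus $|\psi(n,1,\lambda,\nu)|$ appearing in criterion (2.3) may be replaced by $\psi(n,1,\lambda,\nu)$ itself.

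With these identities in hand the conclusion is immediate: the two sums in (2.3) carry identical coefficients for $n\ge 2$ and merge into
\[
\sum_{n=2}^\infty n(n-1)[1+\lambda(n^2-1)]B(n-1,2-\nu)\bigl(|a_n|+|b_n|\bigr)<1-\beta ,
\]
while the set description follows by putting $k=1$ in Definition 1.3, which kills the middle term (its factor is $1-k$) and leaves $Re\{(1-\lambda)\Omega^\nu f/z+\lambda(\Omega^\nu f)''/z''\}>\beta$. The single point I would treat separately is the $n=1$ term of the co-analytic sum: the Beta form is degenerate there, since the factor $n(n-1)$ vanishes while $B(0,2-\nu)$ diverges, so I would read its weight off the Gamma-function form (2.2) directly and check how the resulting $|b_1|$ contribution fits the stated summation before closing the argument.
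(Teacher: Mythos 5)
Your approach is the same as the paper's: the paper offers no argument at all for this corollary beyond the phrase ``putting $k=1$ in Theorem 2.2,'' and your bracket algebra carries that out correctly --- indeed $\phi(n,1,\lambda,\nu)=\psi(n,1,\lambda,\nu)=n(n-1)[1+\lambda(n^2-1)]B(n-1,2-\nu)$ for $n\geq 2$, the bracket is positive since $\lambda\geq 0$, so the absolute value may be dropped and the two sums merged.

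However, the one step you deferred --- the $n=1$ term of the co-analytic sum --- is not a formality; it is exactly where the printed statement breaks down, and your proof cannot close as written. Reading the weight off the Gamma form (2.2), as you propose, gives
\[
\psi(1,1,\lambda,\nu)=\frac{[1-\lambda\cdot 2\cdot(1-1)]\,\Gamma(2)\Gamma(2-\nu)}{\Gamma(2-\nu)}=1 ,
\]
so specializing (2.3) at $k=1$ actually yields
\[
|b_1|+\sum_{n=2}^\infty n(n-1)[1+\lambda(n^2-1)]B(n-1,2-\nu)\bigl(|a_n|+|b_n|\bigr)<1-\beta ,
\]
with an extra $|b_1|$ that the corollary as printed silently drops (note that the paper's other corollaries keep their co-analytic sums starting at $n=1$, in a degenerate Beta-form notation; only this one starts everything at $n=2$). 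The discrepancy is real: take $h(z)=z$, $g(z)=(1-\beta)z$ with $0<\beta<1$. The printed inequality reads $0<1-\beta$ and is satisfied, yet the left side of (2.3) equals $1-\beta$, which is not strictly less than $1-\beta$, so by Theorem 2.2 this $f$ is \emph{not} in $\overline{\mathcal{H}\mathcal{M}}(\beta,\lambda,1,\nu)$, contradicting the printed ``if and only if.'' So your instinct to check that term before closing the argument was correct, and completing the check shows you must either restore the $|b_1|$ term (thereby correcting the statement) or restrict to $b_1=0$; the statement as printed cannot be derived from Theorem 2.2.
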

Finally putting $k=0$ in Theorem 2.2 we obtain:
\begin{corollary} \label{th2.2} $f\in\overline{\mathcal{H}\mathcal{M}}(\beta,\lambda,0,\nu)=\left\{f:~ Re~ \left((1-\lambda)\frac{\Omega^\nu f(z)}{z}
+\lambda\frac{(\Omega^\nu f(z))'}{z'}\right)>\beta\right\}$ if and
only if \[\sum_{n=2}^\infty
n(n-1)[1+\lambda(n-1)]B(n-1,2-\nu)|a_n|+\sum_{n=1}^\infty
n(n-1)|1-\lambda(n+1)|B(n-1,2-\nu)|b_n|<1-\beta.\]
\end{corollary}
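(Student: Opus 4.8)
The plan is to specialize the necessary and sufficient condition of Theorem 2.2 to the value $k=0$; no new analysis is required beyond a bookkeeping simplification of the coefficient functionals $\phi$ and $\psi$. First I would record that, by Theorem 2.2, a function $f\in\mathcal{H}$ lies in $\overline{\mathcal{H}\mathcal{M}}(\beta,\lambda,0,\nu)$ if and only if
\[\sum_{n=2}^\infty\phi(n,0,\lambda,\nu)|a_n|+\sum_{n=1}^\infty|\psi(n,0,\lambda,\nu)||b_n|<1-\beta,\]
and that the defining inequality of the class reduces, upon setting $k=0$ in Definition 1.3, to $Re\{(1-\lambda)\Omega^\nu f(z)/z+\lambda(\Omega^\nu f(z))'/z'\}>\beta$, exactly the description stated in the corollary.

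Next I would substitute $k=0$ into the Beta-function expressions
\[\phi(n,k,\lambda,\nu)=n(n-1)[1+\lambda(n-1)(1+nk)]B(n-1,2-\nu),\quad \psi(n,k,\lambda,\nu)=n(n-1)[1-\lambda(n+1)(1-nk)]B(n-1,2-\nu)\]
obtained in the proof of Theorem 2.1. The factors $(1+nk)$ and $(1-nk)$ collapse to $1$, giving $\phi(n,0,\lambda,\nu)=n(n-1)[1+\lambda(n-1)]B(n-1,2-\nu)$ and $\psi(n,0,\lambda,\nu)=n(n-1)[1-\lambda(n+1)]B(n-1,2-\nu)$. Since $n(n-1)B(n-1,2-\nu)\ge 0$, taking absolute values yields $|\psi(n,0,\lambda,\nu)|=n(n-1)|1-\lambda(n+1)|B(n-1,2-\nu)$.

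Inserting these into the condition from the first step produces precisely the stated inequality. I would finally remark that the factor $n(n-1)$ annihilates the $n=1$ term of the co-analytic sum, so that both series may effectively be taken from $n=2$; this is consistent with the normalization in $(1.2)$. The only point requiring a moment of care is the absolute value on $\psi$: since $1-\lambda(n+1)$ changes sign for large $n$ whenever $\lambda>0$, one must keep it inside $|\cdot|$ rather than simplify it away, but beyond this the derivation is a direct substitution with no genuine obstacle.
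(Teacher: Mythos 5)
Your main line of argument coincides with the paper's: the corollary is obtained by putting $k=0$ in Theorem 2.2 and simplifying $\phi(n,0,\lambda,\nu)$ and $\psi(n,0,\lambda,\nu)$ via the Beta-function identities from the proof of Theorem 2.1. Your substitution is correct as far as it goes: $\phi(n,0,\lambda,\nu)=n(n-1)[1+\lambda(n-1)]B(n-1,2-\nu)$, $|\psi(n,0,\lambda,\nu)|=n(n-1)|1-\lambda(n+1)|B(n-1,2-\nu)$, and the defining inequality of the class reduces to the stated one.

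However, your closing remark --- that the factor $n(n-1)$ annihilates the $n=1$ term of the co-analytic sum, so that both series ``may effectively be taken from $n=2$'' --- is a genuine error, not a harmless simplification. The product $n(n-1)B(n-1,2-\nu)$ is not zero at $n=1$: it is the indeterminate form $0\cdot\infty$, because $B(0,2-\nu)=\int_0^1 t^{-1}(1-t)^{1-\nu}\,dt$ diverges. Its correct value comes from the identity $n(n-1)B(n-1,2-\nu)=\Gamma(n+1)\Gamma(2-\nu)/\Gamma(n+1-\nu)$ underlying (2.2), which at $n=1$ equals $1$; equivalently, directly from (2.2), $\psi(1,0,\lambda,\nu)=1-2\lambda$. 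So the $n=1$ term contributes $|1-2\lambda|\,|b_1|$, which is nonzero unless $\lambda=1/2$, and discarding it destroys the ``only if'' direction of the corollary. Concretely, take $\lambda=0$ and $f(z)=z+b_1\overline z$ with $1-\beta<b_1<1$: all coefficients with $n\geq 2$ vanish, so your truncated condition holds vacuously, yet $Re\left(\Omega^\nu f(z)/z\right)=Re\left(1+b_1\overline z/z\right)$ equals $1-b_1<\beta$ at $z=ir$, so $f\notin\overline{\mathcal{H}\mathcal{M}}(\beta,0,0,\nu)$. The co-analytic sum must therefore be kept from $n=1$, with its first coefficient read through the Gamma-quotient form of $\psi$, exactly as in the statement of Theorem 2.2 and of the corollary itself.
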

\begin{theorem} \label{th2.2}
$f\in\overline{\mathcal{H}\mathcal{M}}(\beta,\lambda,k,\nu)$ if and
only if
\begin{equation}
f(z)=t_1z+\sum_{n=2}^\infty t_nf_n(z)+\sum_{n=1}^\infty s_ng_n(z) ~~
(z\in\mathcal{D}),
\end{equation}
where $t_i\geq 0,~ s_i\geq 0,~ t_1+\sum_{n=2}^\infty
t_n+\sum_{n=1}^\infty s_n=1$ and
\[f_n(z)=z-\frac{1-\beta}{\phi(n,k,\lambda,\nu)}z^n,\]
\[g_n(z)=z+\frac{1-\beta}{|\psi(n,k,\lambda,\nu)|}\overline z^n.\]
\end{theorem}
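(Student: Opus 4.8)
The plan is to reduce both implications to the coefficient characterization of Theorem~2.2, which already translates membership in $\overline{\mathcal{H}\mathcal{M}}(\beta,\lambda,k,\nu)$ into the single inequality (2.3) on the moduli $|a_n|,|b_n|$. The building blocks are tailored to this: $f_n$ carries exactly one nonlinear analytic coefficient, of modulus $(1-\beta)/\phi(n,k,\lambda,\nu)$, and $g_n$ carries exactly one co-analytic coefficient, of modulus $(1-\beta)/|\psi(n,k,\lambda,\nu)|$. Each is chosen so that its own term in the sum (2.3) equals $(1-\beta)$ times the weight attached to it, and this exact cancellation is what drives the whole argument.

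For the ``if'' direction I would take $f$ of the form (2.5), expand, and collect coefficients. Since $t_1+\sum_{n\ge2}t_n+\sum_{n\ge1}s_n=1$, the linear analytic term is simply $z$, so $f=h+\overline g$ fits the shape (1.2) with $|a_n|=t_n(1-\beta)/\phi(n,k,\lambda,\nu)$ and $|b_n|=s_n(1-\beta)/|\psi(n,k,\lambda,\nu)|$. Substituting these into the left-hand side of (2.3), the factors $\phi$ and $|\psi|$ cancel, so the sum reduces to $(1-\beta)\big(\sum_{n\ge2}t_n+\sum_{n\ge1}s_n\big)=(1-\beta)(1-t_1)\le 1-\beta$, and Theorem~2.2 then yields $f\in\overline{\mathcal{H}\mathcal{M}}(\beta,\lambda,k,\nu)$.

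For the ``only if'' direction, given $f\in\overline{\mathcal{H}\mathcal{M}}(\beta,\lambda,k,\nu)$ in the form (1.2), I would invert the relations above and set
\[
t_n=\frac{\phi(n,k,\lambda,\nu)}{1-\beta}\,|a_n|\ (n\ge2),\qquad s_n=\frac{|\psi(n,k,\lambda,\nu)|}{1-\beta}\,|b_n|\ (n\ge1),\qquad t_1=1-\sum_{n\ge2}t_n-\sum_{n\ge1}s_n.
\]
These are nonnegative, and the necessary half of Theorem~2.2 gives $\sum_{n\ge2}t_n+\sum_{n\ge1}s_n\le 1$, so $t_1\ge 0$ and the weights sum to $1$. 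A direct substitution into (2.5) then recovers precisely the coefficients of $f$, establishing the representation.

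The only step needing genuine care, and the closest thing to an obstacle, is the boundary case $t_1=0$: the extremal functions $f_n$ and $g_n$ themselves attain equality in (2.3) rather than the strict inequality stated there. I would therefore record at the outset that the coefficient condition is to be read in its closed form ($\le 1-\beta$) when identifying extreme points; with that convention fixed, both directions are routine bookkeeping with no analytic difficulty.
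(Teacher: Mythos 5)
Your proof follows essentially the same route as the paper's: both directions reduce to the coefficient characterization of Theorem~2.2, with the identical choice of weights $t_n=\frac{\phi(n,k,\lambda,\nu)}{1-\beta}|a_n|$, $s_n=\frac{|\psi(n,k,\lambda,\nu)|}{1-\beta}|b_n|$, $t_1=1-\sum_{n\ge2}t_n-\sum_{n\ge1}s_n$, and the same cancellation of $\phi$ and $|\psi|$ in the coefficient sum. Your closing remark about the boundary case $t_1=0$ is a genuine improvement rather than a deviation: the paper's own proof asserts $(1-\beta)(1-t_1)<1-\beta$, which fails when $t_1=0$ (the extreme points $f_n$, $g_n$ themselves give equality), so reading the coefficient condition in its closed form $\le 1-\beta$ is exactly the repair needed to make the statement and its proof consistent.
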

\begin{proof} Let $f$ be of the form (2.5) then we have
\begin{eqnarray}
f(z)&&=t_1z+\sum_{n=2}^\infty
t_n\left(z-\frac{1-\beta}{\phi(n,k,\lambda,\nu)}z^n\right)+\sum_{n=1}^\infty
s_n \left(z+\frac{1-\beta}{|\psi(n,k,\lambda,\nu)|}\overline
z^n\right)\nonumber\\
&&=z-\sum_{n=2}^\infty
\frac{1-\beta}{\phi(n,k,\lambda,\nu)}t_nz^n+\sum_{n=1}^\infty
\frac{1-\beta}{|\psi(n,k,\lambda,\nu)|}s_n\overline z^n.\nonumber
\end{eqnarray}
Therefore we have
\begin{eqnarray}
&&\sum_{n=2}^\infty\phi(n,k,\lambda,\nu)
\frac{1-\beta}{\phi(n,k,\lambda,\nu)}t_n+\sum_{n=1}^\infty|\psi(n,k,\lambda,\nu)|
\frac{1-\beta}{|\psi(n,k,\lambda,\nu)|}s_n\nonumber\\
&&=(1-\beta)\left[\sum_{n=2}^\infty t_n+\sum_{n=1}^\infty
s_n\right]=(1-\beta)(1-t_1)\nonumber\\
&&<1-\beta.\nonumber
\end{eqnarray}
This shows that
$f\in\overline{\mathcal{H}\mathcal{M}}(\beta,\lambda,k,\nu).$
Conversely suppose that
$f\in\overline{\mathcal{H}\mathcal{M}}(\beta,\lambda,k,\nu)$ letting
\[t_1=1-\sum_{n=2}^\infty t_n-\sum_{n=1}^\infty s_n,\]
where
\[t_n=\frac{\phi(n,k,\lambda,\nu)}{1-\beta}|a_n|,~
s_n=\frac{|\psi(n,k,\lambda,\nu)|}{1-\beta}|b_n|.\] We obtain
\begin{eqnarray}
f(z)&&=z-\sum_{n=2}^\infty |a_n|z^n+\sum_{n=1}^\infty |b_n|\overline
z^n\nonumber\\
&&=z-\sum_{n=2}^\infty
\frac{1-\beta}{\phi(n,k,\lambda,\nu)}t_nz^n+\sum_{n=1}^\infty
\frac{1-\beta}{|\psi(n,k,\lambda,\nu)|}s_n\overline z^n.\nonumber\\
&&=z-\sum_{n=2}^\infty(z-f_n(z))t_n+\sum_{n=1}^\infty(g_n(z)-z)s_n\nonumber\\
&&=\left(1-\sum_{n=2}^\infty t_n-\sum_{n=1}^\infty
s_n\right)z+\sum_{n=2}^\infty t_nf_n(z)+\sum_{n=1}^\infty
s_ng_n(z)\nonumber\\
&&=t_1z+\sum_{n=2}^\infty t_nf_n(z)+\sum_{n=1}^\infty
s_ng_n(z).\nonumber
\end{eqnarray}
This completes the proof.
\end{proof}
\section{Convolution and Convex combinations}
In the present section we investigate the convolution properties of
the class $\overline{\mathcal{H}\mathcal{M}}(\beta,\lambda,k,\nu).$
The convolution of two harmonic function $f_1$ and $f_2$ given by
\begin{equation}
f_1(z)=z-\sum_{n=2}^\infty |a_n|z^n+\sum_{n=1}^\infty |b_n|\overline
z^n,\\
f_2(z)=z-\sum_{n=2}^\infty |c_n|z^n+\sum_{n=1}^\infty |d_n|\overline
z^n,
\end{equation}
is defined by
\begin{equation}
(f_1*f_2)(z)=z-\sum_{n=2}^\infty |a_nc_n|z^n+\sum_{n=1}^\infty
|b_nd_n|\overline z^n.
\end{equation}
\begin{theorem} \label{th2.2} For $0\leq\beta<\alpha<1 $ let $f_1,~
f_2$ be of the form (3.1) such that for every $n,~ |c_n|<1,~
|d_n|<1.$ If $f_1,~
f_2\in\overline{\mathcal{H}\mathcal{M}}(\alpha,\lambda,k,\nu)$ then
\[f_1*f_2\in\overline{\mathcal{H}\mathcal{M}}(\alpha,\lambda,k,\nu)\subset\mathcal{H}\mathcal{M}(\beta,\lambda,k,\nu).\]
\end{theorem}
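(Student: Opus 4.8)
The plan is to bypass the harmonic functional entirely and argue solely from the sharp coefficient characterisation of Theorem 2.2 together with the sufficient condition of Theorem 2.1. First I would observe that the convolution
$$(f_1*f_2)(z)=z-\sum_{n=2}^\infty |a_nc_n|z^n+\sum_{n=1}^\infty |b_nd_n|\overline z^n$$
is again of the form (3.1), so by Theorem 2.2 its membership in $\overline{\mathcal{HM}}(\alpha,\lambda,k,\nu)$ is equivalent to the single inequality
$$\sum_{n=2}^\infty \phi(n,k,\lambda,\nu)|a_nc_n|+\sum_{n=1}^\infty |\psi(n,k,\lambda,\nu)||b_nd_n|<1-\alpha.$$
Because $f_1\in\overline{\mathcal{HM}}(\alpha,\lambda,k,\nu)$, the same theorem applied to $f_1$ furnishes the companion inequality $\sum_{n=2}^\infty \phi(n,k,\lambda,\nu)|a_n|+\sum_{n=1}^\infty |\psi(n,k,\lambda,\nu)||b_n|<1-\alpha$.

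The decisive step is the term-by-term domination provided by the hypothesis $|c_n|<1$ and $|d_n|<1$: for every $n$ one has $|a_nc_n|=|a_n||c_n|\le|a_n|$ and $|b_nd_n|=|b_n||d_n|\le|b_n|$, and since the weights $\phi(n,k,\lambda,\nu)$ and $|\psi(n,k,\lambda,\nu)|$ are nonnegative, the convolution sum is majorised by the sum for $f_1$, which already lies strictly below $1-\alpha$. Hence the target inequality holds and $f_1*f_2\in\overline{\mathcal{HM}}(\alpha,\lambda,k,\nu)$. Note that only the membership of $f_1$ and the boundedness of the coefficients of $f_2$ enter here; the assumed membership of $f_2$ is not needed separately.

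For the inclusion $\overline{\mathcal{HM}}(\alpha,\lambda,k,\nu)\subset\mathcal{HM}(\beta,\lambda,k,\nu)$ I would use the monotonicity of the bound in the order parameter. As $\beta<\alpha$ we have $1-\alpha<1-\beta$, so every $f\in\overline{\mathcal{HM}}(\alpha,\lambda,k,\nu)$ satisfies, through Theorem 2.2, $\sum_{n=2}^\infty \phi(n,k,\lambda,\nu)|a_n|+\sum_{n=1}^\infty |\psi(n,k,\lambda,\nu)||b_n|<1-\alpha<1-\beta$; the sufficient condition of Theorem 2.1 then places $f$ in $\mathcal{HM}(\beta,\lambda,k,\nu)$. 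Composing the two parts yields the chain asserted in the statement.

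I do not anticipate a genuine obstacle, as each step is a direct appeal to inequalities already proved. The one point that merits care is recognising that $|c_n|<1$ and $|d_n|<1$ is a real extra hypothesis, not a by-product of membership: the co-analytic weights $|\psi(n,k,\lambda,\nu)|$ can fall below $1-\alpha$ (the bracket $1-\lambda(n+1)(1-nk)$ may be made small for suitable $\lambda,k,n$), in which case Theorem 2.2 only yields $|d_n|<(1-\alpha)/|\psi(n,k,\lambda,\nu)|$, a bound that does not force $|d_n|<1$. It is exactly this boundedness that powers the majorisation above.
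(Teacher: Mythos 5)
Your proposal is correct and follows essentially the same route as the paper: term-by-term majorisation of the convolution's coefficient sum by that of $f_1$ (using $|c_n|<1$, $|d_n|<1$), then the characterisation of Theorem 2.2 to conclude membership in $\overline{\mathcal{H}\mathcal{M}}(\alpha,\lambda,k,\nu)$. You additionally spell out the inclusion $\overline{\mathcal{H}\mathcal{M}}(\alpha,\lambda,k,\nu)\subset\mathcal{H}\mathcal{M}(\beta,\lambda,k,\nu)$ via $1-\alpha<1-\beta$ and Theorem 2.1, a step the paper leaves implicit, and you correctly handle the strict-versus-nonstrict inequality that the paper glosses over.
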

\begin{proof} Considering (3.2) we have
\begin{eqnarray}
&&\sum_{n=2}^\infty\phi(n,k,\lambda,\nu)|a_nc_n|+\sum_{n=1}^\infty
|\psi(n,k,\lambda,\nu)||b_nd_n|\nonumber\\
&&<\sum_{n=2}^\infty\phi(n,k,\lambda,\nu)|a_n|+\sum_{n=1}^\infty
|\psi(n,k,\lambda,\nu)||b_n|\nonumber\\
&&<1-\alpha,
\end{eqnarray}
and the proof is complete.
\end{proof}
In the last theorem we examine the convex combination properties of
the elements of
$\overline{\mathcal{H}\mathcal{M}}(\beta,\lambda,k,\nu).$
\begin{theorem} \label{th2.2} The class
$\overline{\mathcal{H}\mathcal{M}}(\beta,\lambda,k,\nu)$ is closed
under convex combination.
\end{theorem}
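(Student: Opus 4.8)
The plan is to reduce the statement entirely to the coefficient characterization of Theorem 2.2, since membership in $\overline{\mathcal{H}\mathcal{M}}(\beta,\lambda,k,\nu)$ is there shown to be equivalent to a single inequality on the moduli of the coefficients. First I would take functions $f_i\in\overline{\mathcal{H}\mathcal{M}}(\beta,\lambda,k,\nu)$ (a finite or countable family), writing each in the normalized form
\[f_i(z)=z-\sum_{n=2}^\infty |a_{n,i}|z^n+\sum_{n=1}^\infty |b_{n,i}|\overline z^n,\]
together with nonnegative weights $t_i$ satisfying $\sum_i t_i=1$, and form the convex combination $f=\sum_i t_i f_i$.

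The second step is to identify the coefficients of $f$. Since the linear term of every $f_i$ is $z$ and $\sum_i t_i=1$, the linear coefficient of $f$ is again $1$, while the higher coefficients are the corresponding convex combinations: the coefficient of $z^n$ is $-\sum_i t_i|a_{n,i}|$ and the coefficient of $\overline z^n$ is $\sum_i t_i|b_{n,i}|$. Thus $f$ has the required form (1.2) with $|a_n|=\sum_i t_i|a_{n,i}|$ and $|b_n|=\sum_i t_i|b_{n,i}|$.

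The key verification is then to check the inequality (2.3) for $f$. Substituting the coefficients above and interchanging the order of the two summations, which is legitimate because every term is nonnegative, I would obtain
\[\sum_{n=2}^\infty\phi(n,k,\lambda,\nu)\Big(\sum_i t_i|a_{n,i}|\Big)+\sum_{n=1}^\infty|\psi(n,k,\lambda,\nu)|\Big(\sum_i t_i|b_{n,i}|\Big)=\sum_i t_i\Big[\sum_{n=2}^\infty\phi(n,k,\lambda,\nu)|a_{n,i}|+\sum_{n=1}^\infty|\psi(n,k,\lambda,\nu)||b_{n,i}|\Big].\]
Since each bracketed quantity is strictly less than $1-\beta$ by Theorem 2.2 applied to $f_i$, the whole expression is bounded above by $(1-\beta)\sum_i t_i=1-\beta$, so $f$ satisfies (2.3) and hence lies in $\overline{\mathcal{H}\mathcal{M}}(\beta,\lambda,k,\nu)$ by the ``if'' direction of Theorem 2.2.

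I expect no genuine obstacle: the entire argument is a linearity-plus-interchange-of-sums computation resting on the coefficient criterion. The only point requiring a word of care is the interchange of the summation over $n$ with the summation over the index $i$; in the countable case this is justified by Tonelli's theorem since all terms are nonnegative, and in the finite-combination case it is immediate.
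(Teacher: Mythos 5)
Your proposal is correct and follows essentially the same route as the paper's own proof: both reduce the statement to the coefficient criterion of Theorem 2.2, write out the coefficients of the convex combination $\sum_i t_i f_i$, and interchange the sums over $n$ and $i$ to bound the resulting expression by $(1-\beta)\sum_i t_i = 1-\beta$. The only difference is cosmetic---you justify the interchange explicitly via Tonelli's theorem, where the paper performs it silently.
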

\begin{proof} Suppose that
\[f_i(z)=z-\sum_{n=2}^\infty |a_{n,i}|z^n+\sum_{n=1}^\infty
|b_{n,i}|\overline z^n,~ i=1,2,...\] then the convex combinations of
$f_i$ may be written as
\[\sum_{i=1}^\infty t_if_i(z)=z-\sum_{n=2}^\infty \left(\sum_{i=1}^\infty
t_i|a_{n,i}|\right)z^n+\sum_{n=1}^\infty \left(\sum_{i=1}^\infty
t_i|b_{n,i}|\right)\overline z^n,\] where $\sum_{i=1}^\infty t_i=1,~
0\leq t_i\leq 1.$ Since
\[\sum_{n=2}^\infty\phi(n,k,\lambda,\nu)|a_{n,i}|+\sum_{n=1}^\infty
|\psi(n,k,\lambda,\nu)||b_{n,i}|<1-\beta,\] so we have
\begin{eqnarray}
&&\sum_{n=2}^\infty\phi(n,k,\lambda,\nu)\left(\sum_{i=1}^\infty
t_i|a_{n,i}|\right)+\sum_{n=1}^\infty
|\psi(n,k,\lambda,\nu)|\left(\sum_{i=1}^\infty t_i|b_{n,i}|\right)\nonumber\\
&&=\sum_{i=1}^\infty
t_i\left\{\sum_{n=2}^\infty\phi(n,k,\lambda,\nu)|a_{n,i}|+\sum_{n=1}^\infty
|\psi(n,k,\lambda,\nu)||b_{n,i}|\right\}\nonumber\\
&&<(1-\beta)\sum_{i=1}^\infty t_i=1-\beta.\nonumber
\end{eqnarray}
This shows that $\sum_{i=1}^\infty
t_if_i(z)\in\overline{\mathcal{H}\mathcal{M}}(\beta,\lambda,k,\nu)$
and the proof is complete.
\end{proof}


\begin{thebibliography}{9}
\bibitem{CS02}O. P. Ahuja and J. M. Jahangiri, On a linear combination of classes of multivalently harmonic functions, \ Kyungpook \ Math.\ J.
{\bf 12} (2002), 61-70.
\bibitem{Liu01} S. s. Bhoosnurmath and S. R. Swamay, Certain classes of analytic functions with negative coefficients, \ Indian \ J. \ Math.
{\bf 27} (1986), 89-100.
\bibitem{Liu01} J. Clunie and T. Sheil Small, Harmonic univalent
functions, \ Ann. \ Acad. \ Aci. \ Fenn. \ A. \ I. \ Math. {\bf 9}
(1984), 25-40.
\bibitem{Liu01} A. Ebadian and A. Tehranchi, On certain class of
harmonic $p$-valent functions by applying the Ruscheweyh
derivatives, to appear in Filomat.
\bibitem{Liu01} J. M. Jahangiri, Harmonic functions starlike in the
unit disk, \ J. \ Math. \ Anal. \ Appl. {\bf 235} (1999), 470-477.
\bibitem{Liu01}A. Lewy, On the non-vanishing of the Jacobian in
certain one-to-one mapping, \ Bull. \ Amer. \ Math. \ soc., {\bf 42}
(1936), 689-710.
\bibitem{Liu01} S. Owa, On the distition theorems, I, \ Kyungpook \ Math.\
J. {\bf 18} (1978), 53-59.
\bibitem{Liu01}  S. Owa and H. M. Srivastava, Univalent and starlike
generalized hypergeometric functions, \ Canada. \ J. \ Math. {\bf
39} (1987), 1057-1072.
\bibitem{Liu01} H. M. Srivastava and S. Owa, An application of fractional derivative, \ Math. \ Jopon. {\bf 29} (1981), 383-389.
\end{thebibliography}
\end{document}